\documentclass[preprint,12pt]{elsarticle}
\usepackage{tikz}
\usepackage{amsthm,amsfonts,amssymb,amscd,amsmath,enumerate,verbatim,calc,graphicx,geometry}
\usepackage[all]{xy}
\newtheorem{theorem}{Theorem}[section]

\newtheorem{corollary}[theorem]{Corollary}
\theoremstyle{definition}
\theoremstyle{definitions}
\newtheorem{definition}[theorem]{Definition}

\newtheorem{remark}[theorem]{Remark}

\theoremstyle{notations}

\theoremstyle{remarks}

\journal{}

\begin{document}

\begin{frontmatter}



\title{Comparison of Topologies on Homotopy Groups with Subgroup Topology Viewpoint }


\author[N. Shahami]{Naghme Shahami }
\ead{na.shahami@mail.um.ac.ir}
\author[B. Mashayekhy]{Behrooz Mashayekhy\corref{cor1}}
\ead{bmashf@um.ac.ir }

\address{Department of Pure Mathematics, Faculty of Mathematical Sciences, Ferdowsi University of Mashhad,
P.O.Box 1159, Mashhad 91775, Iran.}
\cortext[cor1]{Corresponding author}

\begin{abstract}
By introducing various topologies on the homotopy groups of a topological space, 
some researchers make these well known notions in algebraic topology more useful 
and powerful. In this paper, first we recall and review some known topologies on homotopy groups. 
Then by reviewing some famous subgroups of homotopy groups 
and using the concept of subgroup topology, we intend to compare these topologies in order to present some results on topologized homotopy groups.
\end{abstract}

\begin{keyword}
Topologized homotopy groups \sep subgroup topology\sep compact-open topology\sep Spanier topology\sep whisker topology\sep higher Spanier group.

\MSC[2020]{55Q05, 55Q07, 55Q52.}

\end{keyword}

\end{frontmatter}

\section{Introduction and Motivation}

During last two decades some researchers have shown that there are various useful, interesting and functorial topologies 
on the fundamental group $\pi_1(X,x_0)$  
and the homotopy groups $\pi_n(X,x_0)$ of a topological space $X$ which make them powerful tools for studying some 
topological properties of spaces 
(see \cite{A16,A20,BKP21,BKP24,B13,BF23,CM,F9,F12,G8,SM,W}).  Recently, the authors have studied and reviewed most of known topologies on 
the fundamental group $\pi_1(X,x_0)$ and summed up all comparison of various topologies on the fundamental group in a diagram 
(see \cite{SM}). 
In this paper, we are going to study and review some topologies on the homotopy groups $\pi_n(X,x_0)$ 
and then using the concept of subgroup topology 
with respect to some neighbourhood families of famous subgroups of the homotopy groups $\pi_n(X,x_0)$, 
we intend to compare topologized homotopy groups 
with various topologies.

The paper is organized as follows. Inspired by some famous subgroups of the fundamental group $\pi_1(X,x_0)$, some researchers have introduced 
and defined similar subgroups for the homotopy group $\pi_n(X,x_0)$ (see \cite{BKP21,BKP24}). 
In Section 2, we review these subgroups including the $n$-small subgroup,
$\pi_n^{s}(X,x_0)$, the $n$-small generated subgroup, $\pi_n^{sg}(X,x_0)$, the $n$-Spanier subgroup, 
$\pi_n^{sp}(X,x_0)$, the $n$-thick Spanier subgroup, 
$\pi_n^{tsp}(X,x_0)$, the $n$-weak thick Spanier subgroup, $\pi_n^{wtsp}(X,x_0)$. 
Also, we are inspired by \cite{TMP} to define the n-path Spanier subgroup, 
$\widetilde{\pi}_n^{sp}(X,x_0)$, and compare this subgroup to the others as follows 
(in order to avoid confusion, from now on, we omit the subscript $n$ for the subgroups of the homotopy group $\pi_n(X,x_0)$):
\[\pi^s(X,x_0) \leq \pi^{sg}(X,x_0) \leq \widetilde{\pi}^{sp}(X,x_0) \leq \pi^{sp}(X,x_0) 
\leq \pi^{tsp}(X,x_0) \leq \pi^{wtsp}(X,x_0). \]

In Section 3, first we review various topologies that have been defined on homotopy groups. 
These topologies include the whisker topology \cite{BM20}, $\pi_n^{wh}(X,x_0)$, 
the  compact-open quotient topology \cite{G8, G10}, $\pi_n^{qtop}(X,x_0)$, the tau topology \cite{B13}, $\pi_n^{\tau}(X,x_0)$, 
the lim topology \cite{G10}, $\pi_n^{lim}(X,x_0)$, 
the Spanier topology \cite{BKP24}, $\pi_n^{Span}(X,x_0)$, the thick Spanier topology \cite{BKP24}, $\pi_n^{tSpan}(X,x_0)$, 
the weak thick Spanier topology \cite{BKP24}, $\pi_n^{wtSpan}(X,x_0)$, 
the shape topology \cite{BF23}, $\pi_n^{sh}(X,x_0)$, and the pseudometric topology \cite{BF23}, $\pi_n^{met}(X,x_0)$. 
Then by reviewing the concept of subgroup topology of a 
neighbourhood family introduced in \cite{BS} especially a type of subgroup topology with respect a subgroup introduced in \cite{SM}
and considering some famous subgroups of the homotopy groups mentioned in Section 2, 
we introduce some topologies on the homotopy groups $\pi_n(X,x_0)$ such as
the path Spanier topology, $\pi_n^{pSpan}(X,x_0)$, the $n$-small subgroup topology, $\pi_n^{\pi^{s}(X,x_0)}(X,x_0)$, 
the $n$-small generated subgroup topology, $\pi_n^{\pi^{sg}(X,x_0)}(X,x_0)$, the $n$-path Spanier subgroup topology, 
$\pi_n^{\widetilde{\pi}^{s}(X,x_0)}(X,x_0)$, the $n$-Spanier subgroup topology, $\pi_n^{\pi^{sp}(X,x_0)}(X,x_0)$, 
the $n$-thick Spanier subgroup topology, $\pi_n^{\pi^{tsp}(X,x_0)}(X,x_0)$, the $n$-weak thick Spanier subgroup topology, 
$\pi_n^{\pi^{wtsp}(X,x_0)}(X,x_0)$.

In Section 4, using some properties of the subgroup topology presented in \cite{BS, SM} and some results of other researchers, 
we are going to compare the above mentioned topologies on homotopy groups with each other as much as possible.
Among some results, we prove that 
\[\pi_n^{sh}(X,x_0) \preccurlyeq \pi_n^{wtSpan}(X,x_0) \preccurlyeq \pi_n^{tSpan}(X,x_0) 
\preccurlyeq \pi_n^{Span}(X,x_0) \preccurlyeq \pi_n^{pSpan}(X,x_0),\]
and
\[\pi_n^{\tau}(X,x_0) \preccurlyeq \pi_n^{qtop}(X,x_0) \preccurlyeq \pi_n^{lim}(X,x_0)=\pi_n^{wh}(X,x_0) 
\preccurlyeq \pi_n^{\pi^{s}(X,x_0)}(X,x_0),\]
where $A \preccurlyeq B$ mean that $B$ is finer than $A$.
Also we show that
\[ \pi_n^{\pi^{wtsp}(X,x_0)}(X,x_0) \preccurlyeq \pi_n^{\pi^{tsp}(X,x_0)}(X,x_0) \preccurlyeq \pi_n^{\pi^{sp}(X,x_0)}(X,x_0) 
\preccurlyeq \pi_n^{\widetilde{\pi}^{s}(X,x_0)}(X,x_0)\] 
\[\preccurlyeq \pi_n^{\pi^{sg}(X,x_0)}(X,x_0) \preccurlyeq  \pi_n^{\pi^{s}(X,x_0)}(X,x_0).\]
Moreover, we investigate some conditions under which some of the above inequalities become equality.
As an example, we define the concept $n$-semilocally $H$-connected at $x_0$ for a subgroup $H \leq \pi_n(X,x_0)$ and 
we prove that a topological space $X$ is $n$-semilocally $H$-connected at $x_0$ if and only if $H$ is an open subgroup of $\pi_n^{wh}(X,x_0)$
which is a generalization of \cite[Theorem 4.2]{A16}. We show that the equality $\pi_n^{wh}(X,x_0)= \pi_n^{\pi^s(X,x_0)}(X,x_0)$ holds 
if and only if $X$ is $n$-semilocally $H$-connected at $x_0$.
Finally, we sum up all results of this section in a diagram in order to compare various topologies on homotopy groups.

\section{Some Subgroups of Homotopy Groups}

In order to review some subgroups on the homotopy groups similar to famous subgroups of the fundamental group, 
we recall some notions and notations from \cite{Naber}.
A loop at $x_0$ in $X$ is a map from $I = [0, 1]$ into $X$ that carries the boundary $\partial I = \{0, 1\}$ onto $x_0$. 
The fundamental group $\pi_1(X, x_0)$ 
consists of all homotopy classes, relative to $\partial I$, of such loops and admits a natural, and very useful, group structure. 
This concept is generalized as follows. 
For each positive integer $n$, $I^n = [0, 1]^n = \{(s_1,...,s_n) \in \mathbb{R}^n : 0 \leq s_i \leq 1, i = 1,...,n\}$ 
is called the n-cube in $\mathbb{R}^n$. 
The boundary $\partial I^n$ of $I^n$ consists of all $(s_1,...,s_n) \in I^n$ for which $s_i = 0$ or $s_i = 1$ for at least one value of $i$. 
If $X$ is a topological space 
and $x_0 \in X$, then an n-loop at $x_0$ is a continuous map $\alpha : I^n \to X$ such that $\alpha (\partial I^n) = {x_0}$. 
The collection of all n-loops at $x_0$ in $X$ 
is partitioned into equivalence classes by homotopy relative to $\partial I^n$. The equivalence class containing an n-loop $\alpha$ 
is denoted by $[\alpha]$. If $\alpha$ is 
an n-loop at $x_0$ in $X$, define $\alpha^{-1} : I^n \to X$ by $\alpha^{-1}(s_1, s_2,...,s_n) = \alpha(1 -  s_1, s_2,...,s_n)$ for all 
$(s_1, s_2,...,s_n) \in I^n$ (see \cite[p. 136]{Naber}). Let $X$ be a topological space, $x_0 \in X$ and $n \geq 2$ a positive integer. 
Let $\pi_n(X, x_0)$ 
be the set of all homotopy classes, relative to $\partial I^n$, of n-loops at $x_0$ in $X$. For $[\alpha], [\beta] \in \pi_n(X, x_0)$, define 
$[\alpha] + [\beta]=[\alpha + \beta]$. Then, with this operation, $\pi_n(X, x_0)$ is an abelian group in which the identity element is $[c_{x_0}]$ 
and the inverse of any $[\alpha]$ is given by $[\alpha^{-1}]$ (see \cite[Theorem 2.5.3]{Naber}). The group $\pi_n(X,x_0)$ is called the 
\emph{n-th homotopy group} of $X$ at $x_0$.

We know that if there exists a path $\sigma : I \to X$ from $x_0$ to $x_1$, then $\pi_1(X,x_0)$ and $\pi_1(X,x_1)$ are isomorphism. 
A similar result exists for homotopy groups. Let $n \geq 2$, define $\sigma_{\#}: \pi_n(X,x_1) \to \pi_n(X,x_0)$ by $\sigma_{\#}[\alpha] = [F_1]$, 
where $F_1(s) = F(s,1)$ for any $s \in I^n$ and $F : I^n \times I \to X$ is a homotopy with the following properties:
\[ F(s,0) = \alpha (s) ; s \in I^n\]
\[ F(s,t) = \sigma^{-1}(t) ; s \in \partial I^n , t \in I.\]
Then by \cite[Theorem 2.5.6]{Naber} $\sigma_{\#}$ is a well-defined isomorphism that depends only on the path homotopy class of $\sigma$, i.e., 
if $\sigma^{\prime} \simeq \sigma$ rel $\{0, 1\}$, then $\sigma^{\prime}_{\#} = \sigma_{\#}$.\\

In \cite{BKP21,BKP24}, Bahredar et al. defined some subgroups of homotopy groups similar to some famous subgroups of 
the fundamental groups as follows.

1. \textbf{The n-small subgroup}: An n-loop $\alpha : (I^n, \partial I^n) \to (X,x_0)$ 
is said to be small if for every open neighborhood $U$ of $x_0$, 
there exists an n-loop $f : I^n \to X$ with the base point $x_0$ such that $f(I^n ) \subseteq U$ and $[\alpha] = [f]$ \cite[Definition 4.5]{BKP21}. 
A \textit{small n-loop subgroup at $x_0$}, denoted by $\pi^s(X,x_0)$, is a subgroup of $\pi_n(X, x_0)$ 
consisting of homotopy classes of all small n-loops 
at $x_0$ (see \cite{BKP21}).

2. \textbf{The n-small generated subgroup}: The set 
$\pi^{sg}(X,x_0) = \{\sigma_{\#}([\alpha]) | [\alpha] \in \pi^s(X,x_0),$ $\sigma \text{ is a path with the end point } x_0\}$ is a subgroup of 
$\pi_n(X,x_0)$ which is called the \textit{n-small generated subgroup} (see \cite{BKP21}).

3. \textbf{The n-Spanier subgroup}: Let $\mathcal{U}$ be an open cover of a pointed space $(X,x_0)$. 
Then $\pi^{sp}(\mathcal{U},x_0)$ is a subgroup of 
$\pi_n(X,x_0)$ which is spanned by all homotopy classes of the form $\prod_{j=1}^{n}\sigma_{{j}_{\#}}[v_j]$, 
where for every $1 \leq j \leq n, \sigma_j(0) = x_0$ 
and the n-loop $v_j$ lies in one of the neighborhoods $U_j \in \mathcal{U}$. This group is called the n-Spanier subgroup 
with respect to $\mathcal{U}$
 (see \cite[Definition 3.1]{BKP21}).  The intersection of the n-Spanier subgroups relative to open covers of $X$ 
 is called the \textit{n-Spanier subgroup} 
 and it is denoted by $\pi^{sp}(X,x_0)$ (see \cite{BKP21}).

4. \textbf{The n-thick Spanier subgroup}: Let $\mathcal{U}$ be an open cover of a pointed space $(X,x_0)$. 
The n-thick Spanier subgroup with respect to 
$\mathcal{U}$ is the subgroup of $\pi_n(X,x_0)$ which is generated by elements of the form $\sigma_{\#}[f_1 \circledast f_2]$, 
where $f_1 \circledast f_2 (z) = f_1(z)$ 
if $z \in S^n_+$ and $f_1 \circledast f_2 (z) = f_2(z)$ if $z \in S^n_-$, $f_1, f_2: (D^n,s_0) \to (X,x_0)$ 
are pointed continuous maps such that for any 
$z \in \partial D^n, f_1(z) = f_2(z)$, and $Im f_i \subseteq U_i$ for some $U_i \in \mathcal{U}$ $(i = 1,2)$ 
and it is denoted by $\pi^{tSp}(\mathcal{U},x_0)$. 
The intersection of the n-thick Spanier subgroups relative to open covers of $X$ is called the \textit{n-thick Spanier subgroup} 
which is denoted by $\pi^{tsp}(X,x_0)$ 
(see \cite{BKP24}).

5. \textbf{The n-weak thick Spanier subgroup}: Let $\mathcal{U}$ be an open cover of a pointed space $(X,x_0)$. 
The n-weak thick Spanier subgroup with respect to $\mathcal{U}$ is the subgroup of $\pi_n(X,x_0)$ 
which is generated by elements of the form $\sigma_{\#}[\alpha]$, 
where $\alpha (I^n) \subseteq U_1 \cup U_2$ for some $U_1,U_2 \in \mathcal{U}$ and it is denoted by $\pi^{wtsp}(\mathcal{U},x_0)$. 
The \textit{n-weak thick Spanier subgroup} of $X$ is the intersection of the $n$-weak thick Spanier subgroups relative to open covers of $X$
 which is denoted by $\pi^{wtsp}(X,x_0)$ (see \cite{BKP24}).

6. \textbf{The n-path Spanier subgroup}: We are inspired by \cite{TMP} to define the n-path Spanier subgroup. 
A path open cover of a pointed space $(X,x_0)$ 
is an open cover $\mathcal{V} = \{V_{\alpha} | \alpha \in P(X,x_0)\}$  such that $\alpha (1) \in V_{\alpha}$. 
Then we define $\widetilde{\pi}^{sp}(\mathcal{V},x_0)$ 
as a subgroup of $\pi_n(X,x_0)$ which is spanned by all homotopy classes of the form $\prod_{j=1}^{n}\alpha_{{j}_{\#}}[v_j]$, where for every 
$1 \leq j \leq n, \alpha_j(0) = x_0$ and the n-loop $v_j$ lies in $V_{\alpha_j} \in \mathcal{V}$. 
We called this subgroup the $n$-path Spanier subgroup 
with respect to the path open cover $\mathcal{V}$.  The intersection of $n$-path Spanier subgroups relative to path open covers of $X$ is called the 
\textit{n-path Spanier subgroup} and we denote it by $\widetilde{\pi}^{sp}(X,x_0)$.

\begin{remark}
(i) By definition of n-Spanier subgroups and n-path Spanier subgroups, it easy to see that $ \widetilde{\pi}^{sp}(X,x_0) \leq \pi^{sp}(X,x_0)$.
 Moreover, let $\mathcal{V} = \{V_{\beta} | \beta \in P(X,x_0)\}$ be a path open cover of $X$ and let $x_0 \in V_{\beta_0}$. 
 If $\alpha$ be a small n-loop 
 at $x_0$ in $X$, there exists an n-loop $f : I^n \to X$ such that $f (I^n) \subseteq V_{\beta_0}$ and $[f] = [\alpha]$, thus 
 $\sigma_{\#}([\alpha]) = \sigma_{\#}([f])$, where $\sigma_{\#} ([\alpha]) \in \pi^{sg}(X,x_0)$ and 
 $\sigma_{\#}([f]) \in \widetilde{\pi}^{sp}(\mathcal{V},x_0)$. 
 Hence $\pi^{sg}(X,x_0) \subseteq \widetilde{\pi}^{sp}(\mathcal{V},x_0)$ for every path open cover $\mathcal{V}$. Therefore, we have
 \[\pi^{sg}(X,x_0) \leq \widetilde{\pi}^{sp}(X,x_0) \leq \pi^{sp}(X,x_0).\]
(ii) By \cite[Proposition 4.6]{BKP21}, \cite[Proposition 3.15]{BKP24} and the above note we have the following chain of subgroups of $\pi_n(X,x_0)$.
\[\pi^s(X,x_0) \leq \pi^{sg}(X,x_0) \leq \widetilde{\pi}^{sp}(X,x_0) \leq \pi^{sp}(X,x_0)\]
 \[ \leq \pi^{tsp}(X,x_0) \leq \pi^{wtsp}(X,x_0) \leq ker\Psi_X\]
where $\Psi_X$ is the canonical map from the $n$-th homotopy group to the $n$-th shape homotopy groups (see \cite{BF23}).
\end{remark}

\section{Some Topologies on Homotopy Groups}

In order to review and introduce some topologies on the homotopy groups similar to famous topologies of the fundamental group, 
we recall the notion of subgroup topology 
of a neighbourhood family introduced in \cite{BS} especially a type of subgroup topology with respect to a subgroup introduced in \cite{SM}.

A collection $\Sigma$ of subgroups of $G$ is called a \textit{neighbourhood family} if for any $H,K \in \Sigma$, there is a subgroup $S \in \Sigma$ 
such that $S \subseteq H\cap K$. As a result of this property, the collection of all left cosets of elements of $\Sigma$ 
forms a basis for a topology on $G$, 
which is called the subgroup topology determined by $\Sigma$ and we denote it by $G^{\Sigma}$. 
The \textit{subgroup topology} on a group $G$ specified 
by a neighbourhood family was defined in \cite[Section 2.5]{BS} and considered by some recent researchers such as \cite{A20,W}.
Since left translation by elements of $G$ determine self-homeomorphisms of $G$, they are homogeneous spaces. 
 Bogley et al. \cite{BS} focused on some general properties of subgroup topologies and by introducing the intersection 
 $S_\Sigma=\cap \{H \ \vert \ H \in \Sigma\}$, 
 called the \textit{infinitesimal} subgroup for the neighbourhood family $\Sigma$. They showed that the closure of the element $g\in G$ 
 is the coset $g S_\Sigma$. 

Let H be a subgroup of a group $G$. Then we define $\Sigma^H$ as follows
\[\Sigma^H= \{K\leqslant G \ | \ H\subseteq K\}.\]
It is easy to see that $\Sigma^H$ is a neighbourhood family. We consider the subgroup topology on $G$ determined by $\Sigma^H$ 
and denote it by $G^H$. 
Note that the infinitesimal subgroup for the neighbourhood family $\Sigma^H$ is $H$.  We have compared two subgroup topologies on a group. 
If $\Sigma$ and $\Sigma^{\prime}$ are two neighbourhood families on $G$ such that $G^{\Sigma} = G^{\Sigma^{\prime}}$, 
then $S_{\Sigma} = S_{\Sigma^{\prime}}$ \cite[Theorem 2.3]{SM}. Let $\Sigma$ and $\Sigma^{\prime}$ 
be two neighbourhood families on $G$ such that $S_{\Sigma} \leq S_{\Sigma^{\prime}}$ and $S_{\Sigma} \in \Sigma$ then
 $G^{\Sigma^{\prime}} \preccurlyeq G^{\Sigma}$. In particular, if $H\leq K\leq G$, then $G^K \preccurlyeq G^H$.
 Furthermore, in \cite{SM} we proved that  $G^H$ is discrete if and only if $H=1$. Also, $G^H$ is indiscrete if and only if $H=G$.  
 It is pointed out in \cite{BS} that although the group $ G $ equipped with a subgroup topology may not necessarily be a topological group, 
 in general (it may not even be a quasitopological group), because right translation maps by a fixed element of $G$ need not be continuous, 
 but it has some of properties of topological groups  (for more details see Theorem 2.9 in \cite{BS}). Wilkins \cite[Lemma 5.4]{W} 
 showed that a group $G$ 
 with the subgroup topology determined by a neighbourhood family $\Sigma$ is a topological group when all subgroups in $\Sigma$ are normal.
 Moreover, it is proved in \cite[Corollary 2.2]{A20} that a group equipped with a subgroup topology is a topological group 
 if and only if all right translation maps are continuous.
Using these facts we can easily see that $G^H$ is a topological group if $H$ is a normal subgroup of $G$ and $G/H$ is an abelian group. 
We vastly extended these results in \cite[Theorem 2.2]{SM}, let $G$ be a group and $\Sigma$ be a neighbourhood family on $G$ 
such that $S_{\Sigma}\in \Sigma $, 
then $G^{\Sigma}$ is a topological group if and only if $S_{\Sigma}$ is a normal subgroup of $G$. In particular, $G^H$ 
is a topological group 
if and only if $H$ is a normal subgroup of $G$.

Using the above subgroup topology and considering subgroups of the homotopy groups mentioned in Section 2, we can introduce some topologies 
on the homotopy groups $\pi_n(X,x_0)$ such as the $n$-small subgroup topology, $\pi_n^{\pi^{s}(X,x_0)}(X,x_0)$, 
the $n$-small generated subgroup topology, $\pi_n^{\pi^{sg}(X,x_0)}(X,x_0)$, the $n$-path Spanier subgroup topology, 
$\pi_n^{\widetilde{\pi}^{sp}(X,x_0)}(X,x_0)$, the $n$-Spanier subgroup topology, $\pi_n^{\pi^{sp}(X,x_0)}(X,x_0)$, 
the $n$-thick Spanier subgroup topology, $\pi_n^{\pi^{tsp}(X,x_0)}(X,x_0)$, the $n$-weak thick Spanier subgroup topology, 
$\pi_n^{\pi^{wtsp}(X,x_0)}(X,x_0)$.

In the following, we review various well-known topologies that have been defined on the homotopy group $\pi_n(X,x_0)$.

1. \textbf{The whisker topology}:  Define 
\[\Sigma^{wh} = \{ \pi_n(i)\pi_n(U,x_0) |\ U \text{ is an open subset of $X$ containing } x_0 \}.\] 
Then it is easy to see that $\Sigma$ is a neighbourhood family on $\pi_n(X,x_0)$. 
The \emph{whisker topology} on the nth homotopy group, $\pi_n(X,x_0)$, 
of a pointed topological space $(X,x_0)$ is the subgroup topology determined by the neighbourhood family $\Sigma^{wh}$ 
which is denoted by $\pi_n^{wh}(X,x_0)$ 
and by \cite[Proposition 2.6]{BM20} $\pi_n^{wh}(X,x_0)$ is a topological group for $n \geq 2$ (see \cite{BM20}). \label{1.1}

2. \textbf{The compact-open quotient topology}:  Let $(X,x_0)$ be a pointed topological space and $\Omega^n(X,x_0)$ 
denote the space of n-loops in X based at $ x_0$. 
There exists the usual compact-open topology on $\Omega^n(X,x_0)$ which is generated by subbasis sets 
$\langle K,U\rangle=\{\alpha\mid \alpha(K)\subseteq U\}$ 
for compact $K\subseteq I^n$ and open $U\subseteq X$. By considering the surjection map 
$q_n:\Omega^n(X,x_0)\to\pi_n(X,x_0),\ q(\alpha)=[\alpha]$ one can 
equip $\pi_n(X,x_0)$  with the quotient topology with respect to the map $q_n:\Omega^n(X,x_0)\to\pi_n(X,x_0)$ 
which is denoted by $\pi_n^{qtop}(X,x_0)$ 
(see \cite{G8, G10}).\label{1.2}

3. \textbf{The tau topology}: In \cite{B13}, it was observed that for any group with topology $G$, there is a finest group topology on the group $G$, 
which is coarser than that of $G$. The resulting topological group is denoted $\tau (G)$. It is often convenient to think of $\tau$ as a 
functor which removes the smallest number of open sets from the topology of $G$ so that a topological group is obtained. 
The quasitopological group $\pi_n^{qtop}(X,x_0)$ with this topology is denoted by $\pi_n^{\tau}(X,x_0)$. \label{1.3}

4. \textbf{The lim topology}: Let $U$ be an open neighborhood of $x_0$ and 
$\widehat{U} = \{ [f]\ |\ f \in \Omega^n(X,x_0) \text{ and } Im f \subseteq U \}$. 
Since $\pi_n(X,x_0)$ is an abelian group, for $n \geq 2$, the filter base $\{\widehat{U}\}$ forms a fundamental system of neighborhoods 
of the identity element 
$[c_{x_0}]$, and hence $\pi_n(X,x_0)$ with this topology becomes a topological group, denoted by $\pi_n^{lim}(X,x_0)$ (see \cite{G10}).

5. \textbf{The Spanier topology}: Bahredar et al. in \cite[Proposition 4.2]{BKP24} showed that 
$\Sigma^{Span} = \{\pi^{sp}(\mathcal{U},x_0) |\  \mathcal{U} \text{ is an open cover of $X$} \}$ is a neighbourhood family on $\pi_n(X,x_0)$ 
and they called the subgroup topology on $\pi_n(X,x_0)$ determined by $\Sigma^{Span}$ the \emph{Spanier topology} (see \cite{BKP24}).

6. \textbf{The thick Spanier topology}: Bahredar et al. in \cite[Proposition 4.2]{BKP24} showed that 
$\Sigma^{tSpan} = \{\pi^{tsp}(\mathcal{U},x_0) |\  \mathcal{U} \text{ is an open cover of $X$} \}$ is a neighbourhood family on $\pi_n(X,x_0)$ 
and they called the subgroup topology on $\pi_n(X,x_0)$ determined by $\Sigma^{tSpan}$ 
the \emph{thick Spanier topology} (see \cite{BKP24}).

7. \textbf{The weak thick Spanier topology}: Bahredar et al. in \cite[Proposition 4.2]{BKP24} showed that 
$\Sigma^{wtSpan} = \{\pi^{tsp}(\mathcal{U},x_0) |\  \mathcal{U} \text{ is an open cover of $X$} \}$ is a neighbourhood family on $\pi_n(X,x_0)$ 
and they called the subgroup topology on $\pi_n(X,x_0)$ determined by $\Sigma^{wtSpan}$ the \emph{weak thick Spanier topology} 
(see \cite{BKP24}).

8. \textbf{The path Spanier topology}: It is easy to see that 
\[\Sigma^{pSpan} = \{\widetilde{\pi}^{sp}(\mathcal{V},x_0) |\  \mathcal{V} \text{ is a path open cover of $X$ } \}\] 
is a neighbourhood family on $\pi_n(X,x_0)$. Therefore, we call the subgroup topology on $\pi_n(X,x_0)$ determined by 
$\Sigma^{pSpan}$ the \emph{path Spanier topology}.

9. \textbf{The shape topology}: Let $cov(X)$ be the directed set of pairs $(\mathcal{U},U_0)$ where $\mathcal{U}$ 
is a locally finite open cover of $X$ and $U_0$ is a distinguished element of $\mathcal{U}$ containing $x_0$. 
Here, $cov(X)$ is directed by refinement. Given $(\mathcal{U},U_0) \in cov(X)$ let $N(\mathcal{U})$ be the abstract simplicial complex 
which is the nerve of $\mathcal{U}$. In particular, $\mathcal{U}$ is the vertex set of $U$ and the n vertices $U_1, . . . , U_n$ 
span an n-simplex if and only if $ \cap_{i=1}^{n}U_i \neq \varnothing$. The geometric realization $|N(\mathcal{U})|$
 is a polyhedron and thus $\pi_n(|N(\mathcal{U})|,U_0)$ may be regarded naturally as a discrete group, e.g. if it is given the quotient topology. 
Given a pair $(\mathcal{V},V_0)$ which refines 
$(\mathcal{U},U_0)$, a simplicial map $p_{\mathcal{U}\mathcal{V}} : |N(\mathcal{V})| \to |N(\mathcal{U})|$ 
is constructed by sending a vertex $V \in \mathcal{V}$ to some $U \in \mathcal{U}$ for which $V \subseteq U$ 
(in particular, $V_0$ is mapped to $U_0$) and extending linearly. The map $p_{\mathcal{U}\mathcal{V}}$ 
is unique up to homotopy and thus induces a unique homomorphism 
$p_{\mathcal{U}\mathcal{V}_{\sharp}} : \pi_n(|N(\mathcal{V})|,V_0) \to \pi_n(|N(\mathcal{U})|,U_0)$. 
The inverse system $(\pi_n(|N(\mathcal{U})|,U_0), p_{\mathcal{U}\mathcal{V}_{\sharp}}, cov(X))$ 
of discrete groups is the nth pro-homotopy group and the limit $\check{\pi}_n(X,x_0)$ 
(topologized with the usual inverse limit topology) is the \textit{n-th shape homotopy group}.
The induced continuous homomorphism $p_{\mathcal{U}_{\sharp}} : \pi_n(X, x_0) \to \pi_n(|N(\mathcal{U})|,U_0)$ 
satisfies $p_{\mathcal{U}_{\sharp}} \circ p_{\mathcal{U}\mathcal{V}_{\sharp}} = p_{\mathcal{V}_{\sharp}}$ 
whenever $(\mathcal{V}, V_0)$ refines $(\mathcal{U},U_0)$. Thus there is a canonical, continuous homomorphism 
$\Psi_n : \pi_n(X,x_0) \to \check{\pi}_n(X,x_0)$ to the n-th shape homotopy group, given by 
$\Psi_n([\alpha]) = ([p_{\mathcal{U}} \circ \alpha])_{\mathcal{U}}$.\\
The shape topology on $\pi_n(X, x_0)$ is the initial topology with respect to the first shape homomorphism 
$\Psi_n: \pi_n(X, x_0)\rightarrow \check{\pi}_n(X,x_0)$. Let $\pi_n^{sh}(X, x_0)$ denote $\pi_n(X, x_0)$ 
equipped with the shape topology which is a topological group 
(see \cite[Definition 3.1]{BF23}).

10. \textbf{The pseudometric topology}: Let $(X,d)$ be a path-connected metric space and consider the uniform metric 
$\mu (\alpha , \beta) = \underset{t \in [0,1]^n}{sup} \{d(\alpha(t),\beta(t))\}$ on $\Omega^n(X,x_0)$. Then the function 
$\rho : \pi_n(X,x_0) \times \pi_n(X,x_0) \to [0,\infty)$, $\rho (a,b) = inf{\mu (\alpha,\beta) | \alpha \in a,\beta \in b}$, is a pseudometric 
on $\pi_n(X,x_0)$ \cite[Theorem 4.5]{BF23}. Let $\pi_n^{met}(X,x_0)$ denote the n-th homotopy group equipped with the topology induced by the 
pseudometric $\rho$. Brazas and Fabel called this topology the pseudometric topology (induced by d) (see \cite{BF23}).

Some researchers have attempted to compare the above topologies as follows.
Ghane et al. in \cite[p. 263]{G10} proved that $\pi_n^{qtop}(X,x_0)$ is coarser than $\pi_n^{lim}(X,x_0)$. Babaee et al. in \cite[p. 1441]{BM20} 
mentioned that $\pi_n^{lim}(X,x_0)$ coincides with the whisker topology $\pi_n^{wh}(X,x_0)$. 
It is shown in \cite[Proposition 3.2]{BF23} that the shape topology of $\pi_n^{sh}(X, x_0)$ is coarser than that of $\pi_n^{\tau}(X, x_0)$.
Note that by \cite{BKP24} one can show that $\pi_n^{Span}(X, x_0)$ is finer than $\pi_n^{tSpan}(X, x_0)$.
By considering the definitions of $\pi_n^{qtop}(X,x_0)$ and $\pi_n^{\tau}(X, x_0)$ it is easy to see that 
$\pi_n^{qtop}(X,x_0)$ is finer than $\pi_n^{\tau}(X, x_0)$.
The Spanier topology is always finer than the shape topology on $\pi_n(X,x_0)$ \cite[Remark 5.5]{AB23}.
Brazas and Fabel proved that if $X$ is a compact metric space, then the topology of $\pi_n^{met}(X,x_0)$ is at least as fine as that of 
$\pi_n^{sh}(X,x_0)$ \cite[Proposition 5.2]{BF23}.\\

\section{Comparison of Topologies on Homotopy Groups}
In this section, using some properties of the subgroup topology presented in \cite{BS, SM} and some results of other researchers, 
we intend to compare the various topologies on homotopy groups mentioned in Section 3 with each other as much as possible.

In the following theorem, we compare the Spanier topology on homotopy groups with the subgroup topology induced by the $n$-Spanier subgroup.
\begin{theorem}\label{4.1}
$(i)$ If $H=\pi^{sp}(X,x_0)$, then $\pi_n^{Span}(X,x_0) \preccurlyeq \pi_n^H(X,x_0)$.\\
$(ii)$ If $\pi_n^{Span}(X,x_0)=\pi_n^K(X,x_0)$ for a subgroup $K$ of $\pi_n(X,x_0)$, then $K=\pi^{sp}(X,x_0)$.
\end{theorem}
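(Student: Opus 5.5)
For (i), I would argue directly with the bases of the two subgroup topologies rather than through the general comparison criterion. The comparison result recalled in Section 3 requires $S_{\Sigma}\in\Sigma$, and $\Sigma^{Span}$ need not contain its intersection, so it is cleaner to check openness by hand.

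The infinitesimal subgroup of $\Sigma^{Span}$ is
\[S_{\Sigma^{Span}}=\bigcap_{\mathcal U}\pi^{sp}(\mathcal U,x_0)=\pi^{sp}(X,x_0)=H,\]
by the definition of the $n$-Spanier subgroup. So for every open cover $\mathcal U$ we have $H\le \pi^{sp}(\mathcal U,x_0)$, that is, $\pi^{sp}(\mathcal U,x_0)\in\Sigma^H$. Consequently every basic open set $g\,\pi^{sp}(\mathcal U,x_0)$ of $\pi_n^{Span}(X,x_0)$ is a coset of a member of $\Sigma^H$, hence a basic open set of $\pi_n^H(X,x_0)$. Every open set of the Spanier topology is then a union of open sets of $\pi_n^H(X,x_0)$, which gives $\pi_n^{Span}(X,x_0)\preccurlyeq\pi_n^H(X,x_0)$.

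For (ii), I would invoke \cite[Theorem 2.3]{SM}: if two neighbourhood families generate the same subgroup topology, their infinitesimal subgroups coincide. The hypothesis says that $\Sigma^{Span}$ and $\Sigma^K$ induce the same topology on $\pi_n(X,x_0)$. The infinitesimal subgroup of $\Sigma^K$ is $K$, and that of $\Sigma^{Span}$ is $\pi^{sp}(X,x_0)$ by the computation above, so $K=\pi^{sp}(X,x_0)$.

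If I wanted to avoid citing that theorem, I would use the closure description from \cite{BS}: the closure of the identity in $G^{\Sigma}$ is $S_\Sigma$. Equal topologies have equal closures of the identity, which again forces equal infinitesimal subgroups. There is no real obstacle in either part. The only point needing care is in (i): one must not appeal to the criterion requiring $S_\Sigma\in\Sigma$, and must instead check openness of basic sets directly as above.
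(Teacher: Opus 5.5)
Your proof is correct and follows the paper's argument. In (i) you use exactly the containment $\Sigma^{Span}\subseteq\Sigma^H$, which comes from $\pi^{sp}(X,x_0)=\bigcap_{\mathcal U}\pi^{sp}(\mathcal U,x_0)$, and in (ii) you make the same appeal to \cite[Theorem 2.3]{SM}. Your caution about the Section 3 criterion is unnecessary, though harmless: that criterion needs $S_\Sigma\in\Sigma$ only for the finer family, here $\Sigma^H$, which trivially contains $H$.
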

\begin{proof}
$(i)$ It is known that $\pi_n^{Span}(X,x_0)$ has subgroup topology with respect to 
$\Sigma^{Span}=\{K\leqslant \pi_n(X,x_0) \ \mid \ K \text{ is an n-Spanier subgroup} \}$ (see \cite{BKP24}).
Since $H=\pi^{sp}(X,x_0)$,  $\Sigma^H=\{K\leqslant \pi_n(X,x_0) \ \mid \pi^{sp}(X,x_0)\subseteq K \}$
and $\pi^{sp}(X,x_0)=\cap_{K\in \Sigma^{Span}} K$, we have
$\Sigma^{Span}\subseteq \Sigma^H$. Hence $\pi_n^{Span}(X,x_0)\preccurlyeq \pi_n^H(X,x_0)$.\\
$(ii)$ It holds by \cite[Theorem 2.3]{SM}.
\end{proof}

\begin{remark}\label{4.2}
Bahredar et al. in \cite{BKP24} mentioned that if $\mathcal{U}$ is an open cover of $X$, 
then $\pi^{sp}(\mathcal{U},x_0) \subseteq \pi^{tsp}(\mathcal{U},x_0) \subseteq \pi^{wtsp}(\mathcal{U},x_0)$. 
Also, it is easy to see that if $\mathcal{V}$ is a path open cover of $X$, 
then $\widetilde{\pi}^{sp}(\mathcal{V},x_0) \subseteq \pi^{sp}(\mathcal{V},x_0)$. 
Thus similar to the proof of the above theorem we have the following comparison of some topologies on the homotopy groups.\\ 
$(i)$ If $H=\pi^{tsp}(X,x_0)$, then $\pi_n^{tSpan}(X,x_0) \preccurlyeq \pi_n^H(X,x_0)$.\\
$(ii)$ If $\pi_n^{tSpan}(X,x_0)=\pi_n^K(X,x_0)$ for a subgroup $K$ of $\pi_n(X,x_0)$, then $K=\pi^{tsp}(X,x_0)$.\\
$(iii)$ If $H=\pi^{wtsp}(X,x_0)$, then $\pi_n^{wtSpan}(X,x_0) \preccurlyeq \pi_n^H(X,x_0)$.\\
$(iv)$ If $\pi_n^{wtSpan}(X,x_0)=\pi_n^K(X,x_0)$ for a subgroup $K$ of $\pi_n(X,x_0)$, then $K=\pi^{wtsp}(X,x_0)$.\\
$(v)$ If $H=\widetilde{\pi}^{sp}(X,x_0)$, then $\pi_n^{pSpan}(X,x_0) \preccurlyeq \pi_n^H(X,x_0)$.\\
$(vi)$ If $\pi_n^{pSpan}(X,x_0)=\pi_n^K(X,x_0)$ for a subgroup $K$ of $\pi_n(X,x_0)$, then $K=\widetilde{\pi}^{sp}(X,x_0)$.\\
$(vii)$ $\pi_n^{wtSpan}(X,x_0) \preccurlyeq \pi_n^{tSpan}(X,x_0) \preccurlyeq \pi_n^{Span}(X,x_0) \preccurlyeq \pi_n^{pSpan}(X,x_0)$.\\

\end{remark}

\begin{remark}\label{4.3}
$(i)$ In \cite[Proposition 3.15]{BKP24}, Bahredar et al. proved that for any pointed space $(X,x_0)$, $\pi^{wtsp}(X,x_0) \leq ker\Psi_X$, 
where $\Psi_X$ is the canonical map from the nth homotopy group to the nth shape homotopy group. 
It is clear that $ker\Psi_X \leq ker p_{\mathcal{U}_{\sharp}}$ On the other hand, Brazas and Fabel in \cite{BF23} 
mentioned that the shape topology on homotopy groups is generated by left cosets of subgroups of the form $ker p_{\mathcal{U}_{\ast}}$. 
Thus, we have the following result, which refines Corollary 5.7 from \cite{BF23}.

 For any pointed space $(X,x_0)$ we have
\[\pi_n^{sh}(X,x_0) \preccurlyeq \pi_n^{wtSpan}(X,x_0) \preccurlyeq \pi_n^{tSpan}(X,x_0) \preccurlyeq \pi_n^{Span}(X,x_0) 
\preccurlyeq \pi_n^{pSpan}(X,x_0).\]
$(ii)$ If $X$ is $T_1$ and paracompact space, then $\pi^{sp}(X,x_0) = \pi^{tsp}(X,x_0) = \pi^{wtsp}(X,x_0)$. 
Thus $\pi_n^{\pi^{wtsp}(X,x_0)}(X,x_0) = \pi_n^{\pi^{tsp}(X,x_0)}(X,x_0) = \pi_n^{\pi^{sp}(X,x_0)}(X,x_0)$. 
Moreover, this identity holds if $X$ is metrizable or path connected topological group (see \cite[Theorem 3.11, Theorem 3.12]{BKP24}).
\end{remark}

\begin{definition}\label{4.4}
Let $X$ be a topological space and $H$ be a subgroup of $\pi_n(X,x_0)$. 
Then $X$ is called \textit{n-semilocally $H$-connected at $x_0$} if there exists an open neighborhood 
$U$ of $x_0$ with $\pi_n(i)(\pi_n(U,x_0)) \leq H$.
\end{definition}

Abdullahi et al. in \cite[Theorem 4.2]{A16} proved that if $H \leq \pi_1(X,x_0)$, 
then $X$ is semilocally $H$-connented at $x_0$ if and only if $H$ is open in $\pi_1^{wh}(X,x_0)$. 
Now we generalize this result to homotopy groups as follows.

\begin{theorem}\label{4.5}
Let $H \leq \pi_n(X,x_0)$, then $X$ is n-semilocally $H$-connected at $x_0$ if and only if $H$ is an open subgroup of $\pi_n^{wh}(X,x_0)$.
\end{theorem}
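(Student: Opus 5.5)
The plan is to work directly from the definition of the whisker topology as the subgroup topology determined by the neighbourhood family $\Sigma^{wh}=\{\pi_n(i)\pi_n(U,x_0)\mid U \text{ open}, x_0\in U\}$. Recall that the basic open sets of $\pi_n^{wh}(X,x_0)$ are the left cosets $g\,\pi_n(i)\pi_n(U,x_0)$ (written additively, $g+\pi_n(i)\pi_n(U,x_0)$, since $\pi_n(X,x_0)$ is abelian for $n\geq 2$). Both implications then reduce to elementary facts about subgroup topologies.

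For the forward direction, suppose $X$ is $n$-semilocally $H$-connected at $x_0$. By Definition \ref{4.4} we get an open neighbourhood $U$ of $x_0$ with $N:=\pi_n(i)(\pi_n(U,x_0))\leq H$. We then write $H$ as a union of cosets of $N$, namely $H=\bigcup_{h\in H}(h+N)$. This works because $h+N\subseteq H$ for each $h\in H$, since $N\leq H$ and $H$ is a subgroup, while every $h\in H$ lies in $h+N$. Each $h+N$ is a basic open set of $\pi_n^{wh}(X,x_0)$, so $H$ is open.

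For the converse, suppose $H$ is open in $\pi_n^{wh}(X,x_0)$. Since the identity $[c_{x_0}]\in H$, there is a basic open set $g+\pi_n(i)\pi_n(U,x_0)$ containing $[c_{x_0}]$ and contained in $H$. The key observation is that a coset of a subgroup that contains the identity must be the subgroup itself. Hence $g+\pi_n(i)\pi_n(U,x_0)=\pi_n(i)\pi_n(U,x_0)$, and so $\pi_n(i)\pi_n(U,x_0)\subseteq H$. This is exactly $n$-semilocal $H$-connectedness at $x_0$.

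The only point needing care is this converse step. We must make sure that open sets in a subgroup topology are unions of basic cosets, which holds because $\Sigma^{wh}$ is a neighbourhood family, so the cosets form a basis. From that, a basic neighbourhood of the identity inside $H$ can be extracted. Beyond this the argument is routine and mirrors \cite[Theorem 4.2]{A16}.
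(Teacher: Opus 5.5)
Your proposal is correct and follows the same route as the paper's proof, which likewise argues directly from the definition of $\Sigma^{wh}$ in both directions. You simply make explicit the two facts the paper leaves implicit: a subgroup containing a member of $\Sigma^{wh}$ is a union of basic cosets, and a basic coset containing the identity is the subgroup itself.
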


\begin{proof}
Let $X$ be n-semilocally $H$-connected at $x_0$, then there is an open neighborhood $U$ of $x_0$ 
such that $\pi_n(i)(\pi_n(U,x_0)) \leq H$. By definition we have $\pi_n(i)(\pi_n(U,x_0)) \in \Sigma^{wh}$, therefore $H$ is open in $\pi_n^{wh}(X,x_0)$.
Conversely, if $H$ is an open subgroup of $\pi_n^{wh}(X,x_0)$, then there exists an open neighborhood $U$ of $x_0$ 
such that $\pi_n(i)(\pi_n(U,x_0))$ $\leq H$. Hence $X$ is n-semilocally $H$-connected at $x_0$.
\end{proof}
Using the above theorem and the definition of the whisker topology we have the following result.
\begin{corollary}\label{4.6}
Let $H \leq \pi_n(X,x_0)$, then $X$ is n-semilocally $H$-connected at $x_0$ if and only if $\pi_n^H(X,x_0) \preccurlyeq \pi_n^{wh}(X,x_0)$.
\end{corollary}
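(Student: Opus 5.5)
The plan is to reduce Corollary \ref{4.6} to Theorem \ref{4.5}, using the structure of the subgroup topology $\pi_n^H(X,x_0)$. The basic open sets of $\pi_n^H(X,x_0)$ are left cosets $gK$ with $H\subseteq K\leq \pi_n(X,x_0)$. Both $\pi_n^H(X,x_0)$ and $\pi_n^{wh}(X,x_0)$ are topologies whose open sets are unions of cosets of subgroups. So the comparison $\pi_n^H \preccurlyeq \pi_n^{wh}$ should come down to the single question of whether $H$ itself is open in the whisker topology.

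For the forward direction, assume $X$ is $n$-semilocally $H$-connected at $x_0$. By Theorem \ref{4.5}, $H$ is then an open subgroup of $\pi_n^{wh}(X,x_0)$. Take any $K\in\Sigma^H$, so $H\subseteq K$. Then $K=\bigcup_{k\in K} kH$ is a union of left cosets of $H$. Each $kH$ is open in $\pi_n^{wh}(X,x_0)$, because left translation is a homeomorphism in any subgroup topology (and $\pi_n^{wh}(X,x_0)$ is in fact a topological group by \cite[Proposition 2.6]{BM20}). Hence $K$ is open, and so is every coset $gK$. Since these cosets form a basis for $\pi_n^H(X,x_0)$, every open set of $\pi_n^H(X,x_0)$ is open in $\pi_n^{wh}(X,x_0)$, which gives $\pi_n^H(X,x_0)\preccurlyeq\pi_n^{wh}(X,x_0)$.

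For the converse, assume $\pi_n^H(X,x_0)\preccurlyeq\pi_n^{wh}(X,x_0)$. Since $H\in\Sigma^H$, the subgroup $H$ is a basic open set of $\pi_n^H(X,x_0)$, and is therefore open in $\pi_n^{wh}(X,x_0)$. Theorem \ref{4.5} then says that $X$ is $n$-semilocally $H$-connected at $x_0$.

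None of these steps is genuinely difficult. The one point needing care is the claim that a subgroup containing an open subgroup is open. This holds because cosets of an open subgroup are open, which is the homogeneity of subgroup topologies under left translation already noted in Section 3.
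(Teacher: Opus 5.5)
Your proof is correct and follows the paper's route. The paper derives the corollary from Theorem~\ref{4.5} together with the definition of the whisker topology, without writing out a proof. You supply the steps it leaves implicit: every $K\supseteq H$ is a union of left cosets of the open subgroup $H$, and $H\in\Sigma^H$ is itself a basic open set of $\pi_n^H(X,x_0)$.
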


Let $[\alpha] \in \cap \{\pi_n(i)(\pi_n(U,x_0)) |\  U \text{ is an open neighbourhood of } x_0\}$, 
then $\alpha$ has a homotopic representative in any open neighbourhood of $x_0$, that is, $\alpha$ is a small n-loop at $x_0$. 
Thus, the infinitesimal subgroup of $\pi_n^{wh}(X,x_0)$ is equal to $\pi^s(X,x_0)$. 
Hence, $\pi_n^{wh}(X,x_0) \preccurlyeq \pi_n^{\pi^s(X,x_0)}(X,x_0)$. 
Note that the equality does not hold in general. By considering the n-dimensional Hawaiian earring , 
$\mathbb{HE}^n$, at the origin $\theta$, we have $\pi_n^{\pi^s(\mathbb{HE}^n,\theta)}(\mathbb{HE}^n,\theta)$ 
is discrete but $\pi_n^{wh}(\mathbb{HE}^n,\theta)$ is not discrete (see \cite[Example 4.6]{BM20}). 
In the following we present an equivalent condition for the equality.

\begin{corollary}\label{4.7}
Let $(X,x_0)$ be a pointed topological space, then 
\[\pi_n^{wh}(X,x_0 \preccurlyeq \pi_n^{\pi^s(X,x_0)}(X,x_0).\]
Also, we have $X$ is n-semilocally $\pi^s(X,x_0)$-connected at $x_0$ if and only if
$\pi_n^{wh}(X,x_0)= \pi_n^{\pi^s(X,x_0)}(X,x_0)$.
Moreover, if $\pi_n^{wh}(X,x_0)=\pi_n^K(X,x_0)$ for a subgroup $K$ of $\pi_n(X,x_0)$, then $K=\pi^{s}(X,x_0)$.
\end{corollary}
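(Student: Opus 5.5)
The plan treats the three assertions of Corollary \ref{4.7} in turn. Each reduces to the general facts on subgroup topologies recalled in Section 3, together with the computation made just before the statement. That computation shows that the infinitesimal subgroup of the neighbourhood family $\Sigma^{wh}$ is
\[S_{\Sigma^{wh}}=\bigcap\{\pi_n(i)\pi_n(U,x_0)\mid U \text{ open}, x_0\in U\}=\pi^s(X,x_0).\]
For the first assertion, observe that every member of $\Sigma^{wh}$ contains $\pi^s(X,x_0)$. Hence $\Sigma^{wh}\subseteq \Sigma^{\pi^s(X,x_0)}$, and every basic open set $g\,\pi_n(i)\pi_n(U,x_0)$ of $\pi_n^{wh}(X,x_0)$ is a basic open set of $\pi_n^{\pi^s(X,x_0)}(X,x_0)$. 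This gives $\pi_n^{wh}(X,x_0)\preccurlyeq \pi_n^{\pi^s(X,x_0)}(X,x_0)$. The argument is the same as in Theorem \ref{4.1}(i).

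For the equivalence, take $H=\pi^s(X,x_0)$ in Corollary \ref{4.6}. It says that $X$ is $n$-semilocally $\pi^s(X,x_0)$-connected at $x_0$ if and only if $\pi_n^{\pi^s(X,x_0)}(X,x_0)\preccurlyeq\pi_n^{wh}(X,x_0)$. Combined with the first assertion, the reverse inequality holds automatically, so this condition is equivalent to equality.

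One point deserves care. Corollary \ref{4.6} is only quoted, so the plan is to check directly that openness of $H$ in $\pi_n^{wh}$ (Theorem \ref{4.5}) gives the comparison $\pi_n^{H}\preccurlyeq\pi_n^{wh}$. Every $K\in\Sigma^{H}$ contains $H$, so $K$ is a union of cosets of $H$. Each coset $gH$ is open in $\pi_n^{wh}(X,x_0)$, because left translations are homeomorphisms; indeed $\pi_n^{wh}$ is a topological group by \cite[Proposition 2.6]{BM20}. Therefore every basic open set $gK$ of $\pi_n^{H}$ is open in $\pi_n^{wh}$. Conversely, if the topologies coincide, then $\pi^s(X,x_0)$ is open in $\pi_n^{wh}(X,x_0)$, since it is open in $\pi_n^{\pi^s(X,x_0)}$. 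Theorem \ref{4.5} then yields that $X$ is $n$-semilocally $\pi^s(X,x_0)$-connected at $x_0$.

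For the last assertion, suppose $\pi_n^{wh}(X,x_0)=\pi_n^K(X,x_0)$. The two neighbourhood families $\Sigma^{wh}$ and $\Sigma^{K}$ then induce the same topology. By \cite[Theorem 2.3]{SM} their infinitesimal subgroups agree, so $K=S_{\Sigma^K}=S_{\Sigma^{wh}}=\pi^s(X,x_0)$.

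The only step with real content is the identification $S_{\Sigma^{wh}}=\pi^s(X,x_0)$. The inclusion $\supseteq$ follows directly from the definition of a small $n$-loop. The inclusion $\subseteq$ holds because a class lying in every $\pi_n(i)\pi_n(U,x_0)$ has a representative with image in every $U$; both directions were already noted before the statement. So I expect no serious obstacle beyond the bookkeeping above. The one thing to keep straight is the direction of $\preccurlyeq$ in Corollary \ref{4.6}.
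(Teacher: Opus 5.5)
Your proposal is correct and follows the same route as the paper. The first inequality comes from $S_{\Sigma^{wh}}=\pi^s(X,x_0)$, equivalently $\Sigma^{wh}\subseteq\Sigma^{\pi^s(X,x_0)}$; the equivalence comes from Corollary \ref{4.6} combined with that inequality; and the last assertion comes from \cite[Theorem 2.3]{SM}. Your one addition is to check Corollary \ref{4.6} directly, via Theorem \ref{4.5} and the fact that any $K\supseteq H$ is a union of open cosets of $H$, which fills in a step the paper leaves implicit.
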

\begin{proof}
Since the infinitesimal subgroup of $\pi_n^{wh}(X,x_0)$ is equal to $\pi^s(X,x_0)$ we have
 $\pi_n^{wh}(X,x_0) \preccurlyeq \pi_n^{\pi^s(X,x_0)}(X,x_0)$.
Let $X$ be n-semilocally $\pi^s(X,x_0)$-connected at $x_0$. 
Then by Corollary \ref{4.6}, $\pi_n^{\pi^s(X,x_0)}(X,x_0) \preccurlyeq \pi_n^{wh}(X,x_0)$. 
Thus we have $\pi_n^{wh}(X,x_0)= \pi_n^{\pi^s(X,x_0)}(X,x_0)$.
The converse is true by Corollary \ref{4.6}.
By \cite[Theorem 2.3]{SM}  if $\pi_n^{wh}(X,x_0)=\pi_n^K(X,x_0)$ for a subgroup $K$ of $\pi_n(X,x_0)$, then $K=\pi^{s}(X,x_0)$.
\end{proof}

\begin{remark}\label{4.8}
$(i)$ Note that by \cite[Proposition 3.2]{BF23} we have $\pi_n^{sh}(X,x_0) \preccurlyeq \pi_n^{\tau}(X,x_0)$. 
Also, by \cite[Lemma 3.10]{AB23} we have $\pi_n^{sh}(X,x_0) \preccurlyeq \pi_n^{Span}(X,x_0)$. 
Note that by \cite[Lemma 5.1]{AB23} the equality holds if $X$ is paracompact, Hausdorf and $LC^{n-1}$ space. 
A space $X$ is called $LC^n$ at $x \in X$ if for every neighborhood $U$ of $x$, 
there exists a neighborhood $V$ of $x$ such that $V \subseteq U$ and such that for all $0 \leq k \leq n$ ($k < \infty$ if $n = \infty$) 
every map $f : \partial\Delta_{k+1} \to V$ extends to a map $g : \triangle_{k+1} \to U$ (see \cite[Definition 4.3]{AB23}). \\
$(ii)$ Note that  $\pi_n^{qtop}(X,x_0)$ is coarser than $\pi_n^{lim}(X,x_0)$ (see \cite[p. 263]{G10}). 
Also, by \cite[Page 1441]{BM20} $\pi_n^{lim}(X,x_0)$ coincides with the whisker topology $\pi_n^{wh}(X,x_0)$. \\
$(iii)$ By \cite[Proposition 4.13]{BF23}, $\pi_n^{qtop}(X,x_0)$ and $\pi_n^{\tau}(X,x_0)$ are at least as fine as that of $\pi_n^{met}(X,x_0)$.
Also, by \cite[Corollary 5.12]{BF23}, $\pi_n^{met}(X,x_0) \preccurlyeq \pi_n^{Span}(X,x_0)$.\\
$(iv)$ Since $\pi_n(X,x_0)$ is an abelian group for each $n\geq 2$ by \cite[Theorem 2.2]{SM} 
 the topologized homotopy group $\pi_n^H(X,x_0)$ is a topological group for any subgroup $H$ of $\pi_n(X,x_0)$,.\\
$(v)$ In \cite[Theorem 2.4]{SM} the authors proved that if $H\leq K\leq G$, then $G^K \preccurlyeq G^H$. Thus we have
\[\pi_n^{\pi^{wtsp}(X,x_0)}(X,x_0) \preccurlyeq \pi_n^{\pi^{tsp}(X,x_0)}(X,x_0) \preccurlyeq \pi_n^{\pi^{sp}(X,x_0)}(X,x_0)\] 
\[\preccurlyeq \pi_n^{\widetilde{\pi}^{sp}(X,x_0)}(X,x_0) \preccurlyeq \pi_n^{\pi^{sg}(X,x_0)}(X,x_0) \preccurlyeq \pi_n^{\pi^s(X,x_0)}(X,x_0).\]
\end{remark}

Finally, we summarize all results of this section in order to compare various topologies on the homotopy groups in the following diagram
(note that $A \longrightarrow B$ means that $A\preccurlyeq B$).

\[ \xymatrix{
& \pi_n^{\pi^s(X,x_0)}(X,x_0) & \\
& & \pi_n^{\pi^{sg}(X,x_0)}(X,x_0) \ar[ul]_{(2)} \\
& \pi_n^{lim}(X,x_0) = \pi_n^{wh}(X,x_0) \ar[uu]_{(1)} & \\
& \pi_n^{qtop}(X,x_0) \ar[u]_{(3)} & \\
& \pi_n^{\tau}(X,x_0) \ar[u]_{(4)} & \\
& \pi_n^{pSpan}(X,x_0) \ar@{.}[u]_{(5)} & \pi_n^{\widetilde{\pi}^{sp}(X,x_0)}(X,x_0) \ar[uuuu]^{(6)}\\
\pi_n^{met}(X,x_0) \ar[r]^{(18)} \ar[uur]^{(19)}  & \pi_n^{Span}(X,x_0) \ar[u]_{(7)} \ar[r]^{(9)} & \pi_n^{\pi^{sp}(X,x_0)}(X,x_0) \ar[u]_{(8)} \\
& \pi_n^{tSpan}(X,x_0) \ar[u]_{(10)} \ar[r]^{(12)} &  \pi_n^{\pi^{tsp}(X,x_0)}(X,x_0) \ar[u]_{(11)} \\
& \pi_n^{wtSpan}(X,x_0) \ar[u]_{(13)} \ar[r]^{(15)} & \pi_n^{\pi^{wtsp}(X,x_0)}(X,x_0) \ar[u]_{(14)} \\
& \pi_n^{sh}(X,x_0) \ar[u]_{(16)} \ar[uuul]^{(17)} & \\
}\]

In the following, according to the enumeration in the above diagram, we give references and complementary notes for each arrow.
\begin{itemize}
\item[(1)] See Corollary \ref{4.7}. By Theorem \ref{4.5} the equality holds if and only if $X$ is n-semilocally $\pi^s(X,x_0)$-connected at $x_0$.
Since $\pi^s(\mathbb{HE}^n,\theta) = 1$ the strict inequality holds for $\mathbb{HE}^n$ at the origin $\theta$ (see \cite[Example 4.6]{BM20}).
\item[(2)] See Remark \ref{4.8} $(v)$.
 The equality holds if and only if $\pi^s(X,x_0)= \pi^{sg}(X,x_0)$ by \cite[Theorem 2.3]{SM}.
 \item[(3)] See Remark \ref{4.8} $(ii)$.
 \item[(4)] See Definition \ref{1.3}.
 The equality holds if and only if $\pi_n^{qtop}(X,x_0)$ is a topological group.
 The strict inequality holds for space $X$ in \cite{F12}, because $\pi_n^{qtop}(X,p)$ is not a topological group.
 \item[(5)] By Remark \ref{4.8} $(i)$ if $X$ is paracompact, Hausdorf and $LC^{n-1}$ space, 
 then $\pi_n^{pSpan}(X,x_0)=\pi_n^{Span}(X,x_0)=\pi_n^{sh}(X,x_0) \preccurlyeq \pi_n^{\tau}(X,x_0)$. 
 We conjecture that $\pi_n^{\tau}(X,x_0)$ is finer than $\pi_n^{pSpan}(X,x_0)$ under mild conditions.
 \item[(6)] See Remark \ref{4.8} $(v)$.
 The equality holds if and only if $\widetilde{\pi}^{sp}(X,x_0)= \pi^{sg}(X,x_0)$ by \cite[Theorem 2.3]{SM}.
 \item[(7)] See Remark \ref{4.3} $(i)$. By \cite[Theorem 2.4]{SM} if $\widetilde{\pi}^{sp}(X,x_0)= \pi^{sp}(X,x_0)$ and 
 $\pi^{sp}(X,x_0)$ is open in $\pi^{Span}(X,x_0)$, then the equality holds.Also, by \cite[Theorem 2.3]{SM} if the equality holds, 
 then $\widetilde{\pi}^{sp}(X,x_0)= \pi^{sp}(X,x_0)$.
 \item[(8)] See Remark \ref{4.8} $(v)$.
 The equality holds if and only if $\widetilde{\pi}^{sp}(X,x_0)= \pi^{sp}(X,x_0)$ by \cite[Theorems 2.3, 2.4]{SM}.
 \item[(9)] See Theorem \ref{4.1}. By \cite[Theorem 2.4]{SM} if $\pi^{sp}(X,x_0)$ is open in $\pi^{Span}(X,x_0)$, then the equality holds.
 \item[(10)] See Remark \ref{4.3} $(i)$. By \cite[Theorem 2.4]{SM} if $\pi^{sp}(X,x_0)= \pi^{tsp}(X,x_0)$ and 
 $\pi^{tsp}(X,x_0)$ is open in $\pi^{tSpan}(X,x_0)$, then the equality holds. Also, by \cite[Theorem 2.3]{SM} if the equality holds, 
 then $\pi^{sp}(X,x_0)= \pi^{tsp}(X,x_0)$.
 \item[(11)] See Remark \ref{4.8} $(v)$.
  The equality holds if and only if $\pi^{sp}(X,x_0)= \pi^{tsp}(X,x_0)$ by \cite[Theorems 2.3, 2.4]{SM}.
 \item[(12)] See Remark \ref{4.2} $(i)$. By \cite[Theorem 2.4]{SM} if  $\pi^{tsp}(X,x_0)$ is open in $\pi^{tSpan}(X,x_0)$, then the equality holds.
 \item[(13)] See Remark \ref{4.3} $(i)$. By \cite[Theorem 2.4]{SM} if $\pi^{wtsp}(X,x_0)= \pi^{tsp}(X,x_0)$ and 
 $\pi^{wtsp}(X,x_0)$ is open in $\pi^{wtSpan}(X,x_0)$, then the equality holds. Also, by \cite[Theorem 2.3]{SM} if the equality holds,
  then $\pi^{wtsp}(X,x_0)= \pi^{tsp}(X,x_0)$.
 \item[(14)] See Remark \ref{4.8} $(v)$.
  The equality holds if and only if $\pi^{tsp}(X,x_0)= \pi^{wtsp}(X,x_0)$ by \cite[Theorem 2.3, Theorem 2.4]{SM}.
 \item[(15)] See Remark \ref{4.2} $(iii)$. By \cite[Theorem 2.4]{SM} if $\pi^{wtsp}(X,x_0)$ is open in\\ $\pi^{wtSpan}(X,x_0)$, then the equality holds.
 \item[(16)] See Remark \ref{4.3} $(i)$.
 \item[(17)] It holds for compact metric spaces by \cite[Proposition 5.2]{BF23}.
 If $X$ is a path-connected compact metric space, then the equality holds in the following two case:\\
 $(i)$ if $X$ is $LC^{n-1}$ \\
 $(ii)$ if $X = \underleftarrow{lim}_{j \in \mathbb{N}}(X_j,r_{j+1,j})$ is an inverse limit of finite polyhedra where the bonding maps 
 $r_{j+1,j} : X_{j+1} \to X_j$ are retractions (see \cite[Theorem 1.1]{BF23}).
 \item[(18)] See Remark \ref{4.8} $(iii)$.
 \item[(19)] See Remark \ref{4.8} $(iii)$.
\end{itemize}

In order to investigate further the above diagram, we raise some questions in the following which we are interested in finding answers to them 
(the questions are numbered to correspond with the respective arrow numbers in the diagram.).
\begin{itemize}
\item[(Q2)] Is there a space $X$ for which the strict inequality $\pi_n^{\pi^{sg}(X,x_0)}(X,x_0) \prec $\\ $ \pi_n^{\pi^s(X,x_0)}(X,x_0)$ holds when $n\geq2$?
Note that the strict inequality holds when $n=1$ for $\mathbb{HA}$ at $b\neq 0$ since $\pi^{s}(\mathbb{HA},b)=1$ 
and $\pi^{sg}(\mathbb{HA},b)=\pi_1(\mathbb{HA},b)$ (\cite [Example 3.12]{A16}).
\item[(Q3)] Is there a necessary and sufficient condition on $X$ for the equality $\pi_n^{qtop}(X,x_0)= \pi_n^{wh}(X,x_0)$ when $n\geq2$?
Note that if $X$  is locally path connected, then the equality holds if and only if $X$ is SLT at $x_0$ (see \cite[Corollary 3.3]{ PA17}). 
Also, is there a space $X$ for which the strict inequality $\pi_n^{qtop}(X,x_0) \prec \pi_n^{wh}(X,x_0)$ holds when $n\geq2$?
Note that the strict inequality holds when $n=1$ for $\mathbb{HE}$ (see \cite[Example 3.25]{A20}).
\item[(Q5)] Is it true that $\pi_n^{pSpan}(X,x_0) \preccurlyeq \pi_n^{\tau}(X,x_0)$?

Nasri et. al in \cite[Theorem 2.1]{N20} proved that $\pi_n^{qtop}(X,x_0) \cong \pi_{n-k}^{qtop}(\Omega^k(X,x_0),e_{x_0})$, 
 where $e_{x_0}$ is a constant $k$-loop in $X$ at $x_0$. Also they presented the following commutative diagram:
 \[ \xymatrix{ 
 \Omega^n(X,x_0) \ar[r]^{\phi} \ar[d]^{q} & \Omega^{n-k}(\Omega^k(X,x_0),e_{x_0}) \ar[d]^{q} \\
 \pi_n^{qtop}(X,x_0) \ar[r]^{\phi_{\ast}}  & \pi_{n-k}^{qtop}(\Omega^k(X,x_0),e_{x_0})} \]
where $\phi : \Omega^n(X,x_0) \to \Omega^{n-k}(\Omega^k(X,x_0),e_{x_0})$ given by $\phi (f) = f^{\sharp}$ 
is a homeomorphism with inverse $g \mapsto g^{\flat}$ in the sense of \cite{Rot}. 
Since $q$ is a quotient map, the homomorphism $\phi_{\ast}$ is an isomorphism between quasitopological homotopy groups.
By the above diagram if we can prove that $\phi_{\ast}(\widetilde{\pi}_n^{sp}(\mathcal{V},x_0))=\widetilde{\pi}_{n-1}^{sp}(\mathcal{V'},e_{x_0}) $, 
then we can conclude that $\pi_n^{pSpan}(X,x_0) \preccurlyeq \pi_n^{\tau}(X,x_0)$.
\item[(Q6)] Is there a space $X$ for which the strict inequality $\pi_n^{\widetilde{\pi}^{sp}(X,x_0)}(X,x_0) \prec$\\ 
$\pi_n^{\pi^{sg}(X,x_0)}(X,x_0)$ holds when $n\geq 2$? Note that the strict inequality holds 
when $n=1$ for the space $RX$ in \cite[Example 2.5]{A16} (see \cite[Example 2]{SM}).
\item[(Q7)] Is there a space $X$ for which the strict inequality $\pi_n^{Span}(X,x_0) \prec \pi_n^{pSpan}(X,x_0)$ holds?
\item[(Q8)] Is there a space $X$ for which the strict inequality $\pi_n^{\pi^{sp}(X,x_0)}(X,x_0) \prec$\\ $ \pi_n^{\widetilde{\pi}^{sp}(X,x_0)}(X,x_0)$ holds?
\item[(Q9)] Is there a space $X$ for which the strict inequality $\pi_n^{Span}(X,x_0) \prec \pi_n^{\pi^{sp}(X,x_0)}(X,x_0)$ holds?
\item[(Q10)] Is there a space $X$ for which the strict inequality $\pi_n^{tSpan}(X,x_0) \prec \pi_n^{Span}(X,x_0)$ holds?
\item[(Q11)] Is there a space $X$ for which the strict inequality $\pi_n^{\pi^{tsp}(X,x_0)}(X,x_0) \prec$\\ $ \pi_n^{\pi^{sp}(X,x_0)}(X,x_0)$ holds?
\item[(Q12)] Is there a space $X$ for which the strict inequality $\pi_n^{tSpan}(X,x_0) \prec$\\ $ \pi_n^{\pi^{tsp}(X,x_0)}(X,x_0)$ holds?
\item[(Q13)] Is there a space $X$ for which the strict inequality $\pi_n^{wtSpan}(X,x_0) \prec \pi_n^{tSpan}(X,x_0)$ holds?
\item[(Q14)] Is there a space $X$ for which the strict inequality $\pi_n^{\pi^{wtsp}(X,x_0)}(X,x_0) \prec$\\ $ \pi_n^{\pi^{tsp}(X,x_0)}(X,x_0)$ holds?
\item[(Q15)] Is there a space $X$ for which the strict inequality $\pi_n^{wtSpan}(X,x_0) \prec$\\ $ \pi_n^{\pi^{wtsp}(X,x_0)}(X,x_0)$ holds?
\item[(Q16)] Is there a necessary and sufficient condition on $X$ for the equality $\pi_n^{sh}(X,x_0) = \pi_n^{wtSpan}(X,x_0)$?
Is there a space $X$ for which the strict inequality $\pi_n^{sh}(X,x_0) \prec \pi_n^{wtSpan}(X,x_0)$ holds?
\item[(Q17)] Is there a compact metric space $X$ for which the strict inequality $\pi_n^{sh}(X,x_0) \prec \pi_n^{met}(X,x_0)$ holds?
\item[(Q18)] Is there a necessary and sufficient condition on $X$ for the equality $\pi_n^{met}(X,x_0) = \pi_n^{Span}(X,x_0)$?
Is there a space $X$ for which the strict inequality $\pi_n^{met}(X,x_0) \prec \pi_n^{Span}(X,x_0)$ holds?
\item[(Q19)] Is there a necessary and sufficient condition on $X$ for the equality $\pi_n^{met}(X,x_0) = \pi_n^{\tau}(X,x_0)$?
Is there a space $X$ for which the strict inequality $\pi_n^{met}(X,x_0) \prec \pi_n^{\tau}(X,x_0)$ holds?
\end{itemize}

\textbf{Declaration of Interest Statement }\\
The authors declare that they have no known competing financial interests or personal relationships that could have appeared to influence the work reported in this paper.


\begin{thebibliography}{20}

\bibitem{A16}
M. Abdullahi Rashid, B. Mashayekhy, H. Torabi, S.Z. Pashaei,
\newblock{On subgroups of topologized fundamental groups and generalized coverings},
\newblock{\em Bull. Iranian Math. Soc.},
\textbf{43}(7), 2349-2370, (2017).

\bibitem{A20}
M. Abdullahi Rashid, N. Jamali, B. Mashayekhy, S.Z. Pashaei, H. Torabi,  
\newblock{On subgroup topologies on fundamental groups}, 
\newblock{\em Hacet. J. Math. Stat.},
\textbf{49}(3), 935-949, (2020).

\bibitem{AB23}
J. Aceti, J. Brazas,
\newblock{Elements of higher homotopy groups undetectable by polyhedral approximation},
\newblock{\em Pacific J. Math.},
\textbf{322}(2), 221-242, (2023).

\bibitem{BM20}
A. Babaee, B. Mashayekhy, H. Mirebrahimi, H. Torabi, M.A. Rashid, S.Z. Pashaei,
\newblock{On topological homotopy groups and relation to Hawaiian groups},
\newblock{\em Hacet. J. Math. Stat.},
\textbf{49}(4), 1437-1449, (2020).

\bibitem{BKP21}
A.A. Bahredar, N. Kouhestani, H. Passandideh,
\newblock The n-dimensional Spanier group,
\newblock{\em Filomat},
\textbf{35}(9), 3169-3182,(2021).

\bibitem{BKP24}
A.A. Bahredar, N. Kouhestani, H. Passandideh,
\newblock On (Thick, Weak Thick)Spanier topology on nth homotopy group,
\newblock{\em Filomat},
\textbf{38}(12), 4381–4394,(2024).

\bibitem{B13}
J. Brazas,
\newblock{The fundamental group as topological group},
\newblock{\em Topology Appl.},
\textbf{160} 170-188 (2013).

\bibitem{BS}
W.A. Bogley, A.J. Sieradski, 
\newblock Universal path spaces,
\newblock http://people.oregonstate.edu/˜bogleyw/research/ups.pdf

\bibitem{BF23}
J. Brazas, P. Fabel,
\newblock{A natural pseudometric on homotopy groups of metric spaces},
\newblock{\em Glasgow Math. J.},
\textbf{66}, 162-174, (2024).

\bibitem{CM}
J.S. Calcut, J.D. McCarthy, 
\newblock{Discreteness and homogeneity of the topological fundamental group},
\newblock{\em Topology Proc.},
\textbf{34},  339--349, (2009).

\bibitem{F9}
P. Fabel,   
\newblock{Multiplication is discontinuous in the Hawaiian earring group (with the quotient topology)}, 
\newblock {\em Bull. Pol. Acad. Sci. Math.}, 
\textbf{59} (1), 77--83, (2011).

\bibitem{F12}
P. Fabel,
\newblock{Compactly generated quasitopological homotopy groups with discontinuous multiplication},
\newblock {\em Topology Proc.},
\textbf{40},  303--309, (2012).

\bibitem{G8}
F.H. Ghane, Z. Hamed, B. Mashayekhy and H. Mirebrahimi,
\newblock{Topological homotopy groups},
\newblock{\em Bull. Belg. Math. Soc. Simon Stevin},
\textbf{15}, 455-464, (2008).

\bibitem{G10}
F.H. Ghane, Z. Hamed, B. Mashayekhy and H. Mirebrahimi,
\newblock{On topological homotopy groups of n-Hawaiian like spaces},
\newblock{\em Topology Proc.},
\textbf{36}, 255-266, (2010).

\bibitem{Naber}
G .Naber,
\newblock{Topology, Geometry and Gauge fields, Foundations},
\newblock{\em Addison-Wesley publishing company, U.S.A, INC, 1961.}

\bibitem{NM12}
T. Nasri, B. Mashayekhy, H. Mirebrahimi,
\newblock{On quasitopological homotopy groups of inverse limit spaces},
\newblock{\em Topology Proc.},
\textbf{46},  145-157, (2015).

\bibitem{N20}
T. Nasri, H. Mirebrahimi, H. Torabi,
\newblock{Some results in quasitopological homotopy groups},
\newblock {\em Ukrainian Math. J.},
\textbf{72}(12), 1663-1668, (2020).

\bibitem{PA17}
S.Z. Pashaei, B. Mashayekhy, H. Torabi, M. Abdullahi Rashid,
\newblock{Small loop transfer spaces with respect to subgroups of fundamental groups},
\newblock{Topology Appl.},
\textbf{232}, 242-255, (2017).

\bibitem{P11}
H. Passandideh, F.H. Ghane and Z. Hamed,
\newblock{On the homotopy groups of separable metric spaces},
\newblock{ \em Topology Appl.},
\textbf{158}, 1607-1614, (2011).

\bibitem{Rot}
J.J. Rotman, 
\newblock An Introduction to Algebraic Topology
\newblock  {\em Springer-Verlag, GTM 119, New York, 1988.}

\bibitem{SM}
N. Shahami, B. Mashayekhy,
\newblock{Comparison of topologies on fundamental groups with subgroup topology viewpoint},
\newblock {\em Math. Slovaca},
\textbf{75}(1), 189-204, (2025).

\bibitem{TMP}
H. Torabi, A. Pakdaman,  B. Mashayekhy,
\newblock{On the Spanier groups and covering and semicovering spaces},
\newblock {\em arXiv:1207.4394v1.}

\bibitem{W}
J. Wilkins,
\newblock{The revised and uniform fundamental groups and universal covers of geodesic spaces},
\newblock {\em Topology Appl.},
\textbf{160}, 812--835, (2013).


\end{thebibliography}
\end{document}